\numberwithin{equation}{section}
\def\Aut{\mathrm{Aut }}
\def\C{\mathbb{C}}
\def\End{\mathrm{End}}
\def\Gr{\mathrm{Gr}}
\def\H{\mathbb{H}}
\def\Hol{\mathrm{Hol}}
\def\R{\mathbb{R}}
\def\Ric{\mathrm{Ric}}
\def\ric{\mathrm{Ric}}
\def\volume{\mathrm{Vol}}
\def\scal{\mathrm{scal}}
\def\dv{\text{ }dV}
\def\SO{\mathrm{SO}}
\def\Sp{\mathrm{Sp}}
\def\SU{\mathrm{SU}}
\def\tr{\mathrm{tr}}
\def\U{\mathrm{U}}
\def\T{\mathrm{T}}
\def\Sym{\mathrm{Sym}}
\def\<#1,#2>{\langle\,#1,\,#2\,\rangle}
\def\bea{\begin{eqnarray*}}
\def\eea{\end{eqnarray*}}
\def\HM{\mathrm{H}}
\def\EM{\mathrm{E}}
\def\Hol{\mathrm{Hol}}
\def\hol{\mathfrak{hol}}
\def\pr{\mathrm{pr}}
\newtheorem{Lemma}{Lemma}[section]
\newtheorem{Theorem}[Lemma]{Theorem}
\newtheorem{Corollary}[Lemma]{Corollary}
\newtheorem{Proposition}[Lemma]{Proposition}
\theoremstyle{definition}
\newtheorem{Remark}[Lemma]{Remark}
\newtheorem{Definition}[Lemma]{Definition}
\newtheorem{Question}[Lemma]{Question}
\title{On stability and scalar curvature rigidity of quaternion-K\"ahler manifolds}
\author{Klaus Kr\"oncke and  Uwe Semmelmann}
\address{Klaus Kr\"oncke\\
Institutionen för Matematik\\
KTH Stockholm\\
Lindstedtsvägen 25\\
10044 Stockholm, Sweden
 }
\email{kroncke@kth.se}
\address{Uwe Semmelmann\\
 Institut für Geometrie und Topologie\\
 Fachbereich Mathematik\\
 Universität Stuttgart\\
 Pfaffenwaldring 57\\
 70569 Stuttgart, Germany}
\email{uwe.semmelmann@mathematik.uni-stuttgart.de}
\date{\today}
\begin{document}
\begin{abstract}
We show that every quaternion-K\"{a}hler manifold of negative scalar curvature is stable as an Einstein manifold and therefore scalar curvature rigid. In particular, this implies that every irreducible nonpositive Einstein manifold of special holonomy is stable.
In contrast, we demonstrate that there exist quaternion-K\"{a}hler manifolds of positive scalar curvature which are not scalar curvature rigid even though they are semi-stable. 
  \vskip10pt
 \noindent 2020 {\it Mathematics Subject Classification}:
  {53C25, 53C26, 58J50.}\\
 \smallskip
 \noindent
 {\it Keywords}: {quaternion Kähler manifolds, stability}
\end{abstract}
\maketitle
\section{Introduction}
\subsection{Stability of Einstein manifolds}\label{subsec:stability}
Let $g$ be an Einstein metric, that is $\Ric_g=\lambda\cdot g$ for some $\lambda\in\R$.
On a compact manifold, Einstein metrics are critical points of the Einstein-Hilbert functional
$$S(g)=
\int_M\scal_g\dv_g.$$
on the space $\mathcal{M}_1$ of Riemannian metrics of volume $1$.
It is well-known that Einstein metrics are always saddle points of the functional, and that index and coindex of $D^2_gS$ are both infinite (see e.g.\ \cite{besse}). However if $h$ is a tt-tensor (that is, a trace-free and divergence-free symmetric $2$-tensor), the second variation is given by 
$$D^2_gS(h,h)=-\frac{1}{2}\int_M\langle \nabla^*\nabla h-2\mathring{R}h,h\rangle\dv,\qquad  ( \mathring{R}h)(X,Y) = \sum_i h(R_{X,e_i}Y, e_i) $$
Thus, $D^2_gS$ has finite coindex on the subspace of tt-tensors which we denote by $TT$.
We call $\Delta_E=\nabla^*\nabla-2\mathring{R}$ the Einstein operator.
\begin{Definition}
A compact Einstein manifold is called \emph{stable} if all eigenvalues of $\Delta_E$ on $TT$ are positive and \emph{semi-stable} if all eigenvalues of $\Delta_E$ on $TT$ are nonnegative. 
\end{Definition}
Let $g_t$ be a curve of Einstein metrics through $g_0=g$ and suppose that $h=\dot{g}_0$ is orthogonal to rescalings and the orbit of the diffeomorphism group acting on $g$. Then $h$ is a tt-tensor and $\Delta_Eh=0$. This motivates the following definition:
\begin{Definition}
An element $h\in \epsilon(g):=\ker(\Delta_E|_{TT})$ is called \emph{infinitesimal Einstein deformation}. We call it \emph{integrable}, if it is tangent to a curve of Einstein metrics. We call an Einstein metric integrable, if all of its infinitesimal deformations are integrable.
\end{Definition}
An important problem is to decide whether $g$ is \emph{non-deformable}, i.e. whether the equivalence class $[g]$ under homothetic scaling and pullback by
diffeomorphisms is isolated in the moduli space of Einstein structures.
If $\epsilon(g)$ is trivial, $g$ is obviously non-deformable. If $\epsilon(g)$ is nontrivial, $g$ can still be non-deformable. For example, the product metric on $S^2\times\mathbb{C}P^{2n}$ admits infinitesimal deformations which are all non-integrable \cite{Koi82}. In contrast, all known Ricci-flat metrics on compact manifolds are integrable, see Subsection \ref{subsubsecion:hol_Ricci_flat} below 

Define $E_g:=\ric_g-\frac{1}{n}(\int_M\scal_g\dv_g) g$ and let $D^k_gE(h,\ldots,h)=\frac{d^k}{dt^k}|_{t=0}E_{g+th}$ be its k'th Frechet derivative at $g$.
Suppose that $h\in\epsilon(g)$ is integrable and let $g_t$ be a curve of Einstein metrics such that $g_0=g$ and ${g}'_0=h$. We may assume that $g_t\in\mathcal{M}_1$, so that $E_{g_t}=0$. By differentiating, we obtain
$$ \frac{d}{dt}|_{t=0}E_{g_t}= D_gE(h)=0,\qquad \frac{d^2}{dt^2}|_{t=0}E_{g_t}=D_gE(k)+D^2_gE(h,h)=0,$$
where $k=g_0''$.
The first condition holds because $h\in\epsilon(g)$. The second equation is not automatic and motivates the following definition:
\begin{Definition}
An infinitesimal Einstein deformation $h\in\epsilon(g)$ is \emph{integrable of second order} if and only if there is a symmetric $2$-tensor $k$ field satisfying 
$$D_gE(k)+D^2_gE(h,h)=0.$$
\end{Definition}
If $h$ is integrable, it is necessarily integrable of second order. Furthermore, $h$ is integrable of second order if and only if $D^2_gE(h,h)\perp \epsilon(g)$ \cite[Lem.\ 4.8]{Koi82}. 
Hence, the vanishing of the so-called Koiso obstruction $\Psi(h) := (D^2_gE(h,h), h)_{L^2}$ is a necessary condition for the integrability of second order for an infinitesimal deformation
$h \in \epsilon(g)$.
Integrability of higher order can be defined in a similar way, see e.g.  \cite[Def. 2.4]{BHMW} or \cite[12.38]{besse} for details.

\subsection{Scalar curvature rigidity}
In a recent work \cite{DK24}, Dahl and the first author discovered a deep relation between stability and scalar curvature rigidity. The latter notion is defined in \cite{DK24} as follows:
\begin{Definition}\label{defn:scr}
We call an Einstein metric $\hat{g}$ on a manifold $M$ \emph{scalar curvature rigid} if there is no metric $g$ near $\hat{g}$ such that
\[
g-\hat{g}|_{M\setminus K}\equiv0,\qquad \volume(K,g)=\volume(K,\hat{g})
\]
for some compact set $K\subset M$ which additonally satisfies
\[
\scal_g\geq\scal_{\hat{g}}, \quad 
\scal_g(p)>\scal_{\hat{g}}(p) \text{ for some }p\in M.
\]
\end{Definition}
Note that in the definition, we only consider metrics \emph{near} an Einstein metric. Many rigidity phenomena involving scalar curvature are global on the space of metrics and do not require a volume constraint. For example, as a consequence of the positive mass theorem, see \cite{Wit81}, there is no metric $g$ on $\R^n$ such that $\scal_g\geq0$ and $\scal_g(p)>0$ for some $p\in\R^n$ which agrees with the Euclidan metric outside a compact set. An analogous statement, holds for hyperbolic space, see \cite{CH03}.

A consequence of Theorem 1.4 and Remark 1.6 in \cite{DK24} is the following result:
\begin{Theorem}[\cite{DK24}]\label{thm:cpt_scr}
Let $(M,g)$ be a compact Einstein manifold. If it  is semi-stable and integrable, it  is also scalar curvature rigid. If $(M,g)$ is unstable, it is not scalar curvature rigid.
\end{Theorem}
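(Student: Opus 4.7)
Since $M$ is compact we may take $K=M$, so Definition~\ref{defn:scr} reduces to the following: $(M,\hat g)$ is scalar curvature rigid iff no nearby metric $g$ with $\volume(M,g)=\volume(M,\hat g)$ satisfies $\scal_g\geq\scal_{\hat g}$, with strict inequality somewhere. My starting observation is that any such $g$ forces $S(g)>S(\hat g)$: since $\scal_{\hat g}$ is constant and the volumes coincide,
\[
S(g)-S(\hat g)=\int_M(\scal_g-\scal_{\hat g})\dv_g>0.
\]
The task therefore reduces to analysing whether $S$ can strictly exceed $S(\hat g)$ on the constant-volume slice near $\hat g$.

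For the rigidity direction, I would combine the Berger--Ebin decomposition with the Ebin slice theorem to identify the tangent space of the constant-volume slice at $\hat g$, modulo diffeomorphisms, with $TT$. On $TT$ the second variation
\[
D^2_{\hat g}S(h,h)=-\tfrac12\int_M\langle\Delta_Eh,h\rangle\dv
\]
is nonpositive by semi-stability, with kernel exactly $\epsilon(\hat g)$; on the $L^2$-complement of $\epsilon(\hat g)$ in $TT$ it is strictly negative, so $S$ strictly decreases to second order there. Along the degenerate directions, integrability supplies an actual curve of unit-volume Einstein metrics through $\hat g$; since the first variation of $S$ vanishes at every Einstein metric, $S$ is constant along such a curve. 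A higher-order Taylor expansion, controlled by the vanishing of the Koiso obstruction and its higher-order analogues (guaranteed by integrability), then shows $\hat g$ is a genuine local maximum of $S$ on the whole slice, contradicting $S(g)>S(\hat g)$.

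For the non-rigidity direction, pick $h_0\in TT$ with $\Delta_E h_0=-\mu h_0$, $\mu>0$. Then $D^2_{\hat g}S(h_0,h_0)=\tfrac{\mu}{2}\|h_0\|^2>0$, so one can build a path $g_t=\hat g+th_0+O(t^2)$, correcting the higher-order terms to preserve both volume and a chosen diffeomorphism gauge, along which $S(g_t)>S(\hat g)$ for small $t\neq 0$. To upgrade this integral excess to the pointwise inequality required by Definition~\ref{defn:scr}, I would conformally deform $g_t$ within its conformal class to a metric of constant scalar curvature and prescribed volume, either via the Yamabe problem or via the implicit function theorem applied to the conformal Laplacian near $\hat g$. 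The fixed volume together with the strict increase in $S$ then forces the resulting constant scalar curvature to exceed $\scal_{\hat g}$ everywhere, violating scalar curvature rigidity.

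The main obstacle is on the rigidity side: upgrading the semi-nonpositivity of $D^2_{\hat g}S$ on $TT$ to a genuine local maximum of $S$. The degenerate directions $\epsilon(\hat g)$ require a higher-order analysis, and it is precisely the integrability hypothesis that supplies the Einstein curves needed to control these terms and rule out nontrivial positive higher-order variations of $S$. A secondary technical point, on the non-rigidity side, is verifying that the conformal-Laplacian-based step producing constant scalar curvature is available near $\hat g$; this is standard when the relevant linearised operator is invertible, which one expects generically, and can be handled by perturbation when it is not.
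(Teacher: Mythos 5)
Your reduction of the rigidity direction contains a genuine gap. You claim that the tangent space of the constant-volume slice at $\hat g$, modulo diffeomorphisms, is $TT$, and then argue that semi-stability plus integrability makes $\hat g$ a local maximum of $S$ on $\mathcal{M}_1$. But the slice is strictly larger than $TT$: modulo the diffeomorphism orbit, $T_{\hat g}\mathcal{M}_1$ also contains the volume-preserving conformal directions $\{f\hat g:\int_M f\dv=0\}$, and on these the second variation of $S$ is essentially $\tfrac{n-2}{2}\int_M f\bigl((n-1)\Delta f-\scal\, f\bigr)\dv$, which by the Obata--Lichnerowicz estimate is \emph{nonnegative} and strictly positive away from the round sphere. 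This is exactly why Einstein metrics are saddle points of $S$ on $\mathcal{M}_1$ with infinite coindex, as recalled in Subsection \ref{subsec:stability}. Consequently $\hat g$ is never a local maximum of $S$ on $\mathcal{M}_1$, and your observation that a violating metric satisfies $S(g)>S(\hat g)$ cannot lead to a contradiction: such metrics with larger total scalar curvature always exist nearby (conformal perturbations), and the whole content of the rigidity statement is to rule out the stronger \emph{pointwise} inequality $\scal_g\geq\scal_{\hat g}$. The argument in \cite{DK24} (sketched in Remark \ref{rem:stability_scr}) avoids this by replacing $S$ with the functional $\lambda_\alpha(g)=$ smallest eigenvalue of $4\alpha\Delta+\scal$; for $\alpha$ small enough, chosen via Obata--Lichnerowicz, $D^2_{\hat g}\lambda_\alpha$ is nonpositive on \emph{all} of $T_{\hat g}\mathcal{M}_1$, including the conformal directions, its kernel is $\{\mathcal{L}_X\hat g\}\oplus\epsilon(\hat g)$, and a local maximum of $\lambda_\alpha$ on $\mathcal{M}_1$ still forbids $\scal_g\geq\scal_{\hat g}$ with strict inequality somewhere. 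Some such modification of the functional (or an equivalent device) is indispensable; without it the rigidity half of your proof does not close.

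The non-rigidity half is essentially correct and close to the paper's route, though you take a detour. The paper invokes Koiso's theorem that the set $\mathcal{C}_1$ of unit-volume constant-scalar-curvature metrics is a manifold near $\hat g$ with tangent space $\{\mathcal{L}_X\hat g\}\oplus TT$; an unstable direction $h_0$ is then tangent to a curve in $\mathcal{C}_1$ along which $S$ strictly increases, and since each $g_t$ has \emph{constant} scalar curvature, $\scal_{g_t}=S(g_t)>S(\hat g)=\scal_{\hat g}$ pointwise with no further work. Your plan --- build a volume- and gauge-corrected path and then conformally normalize to constant scalar curvature via the operator $(n-1)\Delta-\scal$ --- amounts to reproving this slice theorem by hand; it works (the linearization is invertible on mean-zero functions away from the sphere, and the sphere is stable so does not occur here), but you should note that the conformal correction is of higher order in $t$, so that it does not destroy the second-order gain $S(g_t)-S(\hat g)=\tfrac{\mu}{4}t^2\|h_0\|_{L^2}^2+O(t^3)$ nor the volume normalization. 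Citing Koiso's description of $\mathcal{C}_1$ directly is cleaner.
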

\begin{Remark}\label{rem:stability_scr}
Because the statement is not explicitly proven in this form in \cite{DK24}, we briefly sketch here why it holds: 
For a parameter $\alpha>0$, let $\lambda_{\alpha}$ be the smallest eigenvalue of the operator $4\alpha\Delta+\scal$. Because the smallest eigenvalue is simple, the functionals $g\mapsto \lambda_{\alpha}(g)$ depend smoothly on the metric.
Einstein metrics are critical points of the functionals $\lambda_{\alpha}$ on $\mathcal{M}_1$, see \cite[Prop.\ 5.3]{DK24}.
Let $\mu$ be the smallest nonzero eigenvalue of the Laplace-Beltrami operator, if $(M,g)$ is not isometric to the round sphere, and the second smallest nonzero eigenvalue otherwise.
By the Obata-Lichnerowicz eigenvalue estimate, we can choose $\alpha>0$ so small that 
\begin{equation*}
\left(1-\frac{n-2}{n-1}\alpha\right)\mu>\frac{\scal_g}{n-1}.
\end{equation*}
If $(M,g)$ is semi-stable and the parameter $\alpha$ satisfies the above inequality, $D^2_g\lambda_{\alpha}$ is nonpositive on $T_g\mathcal{M}_1$ and its kernel is spanned by the Lie-derivatives of $g$ and the space $\epsilon(g)$, which can be concluded from the formulas in \cite[Sec.\ 5.4]{DK24}. Integrability implies that there is a manifold $\mathcal{E}_1\subset\mathcal{M}_1$ of Einstein metrics which is tangent to $\mathrm{ker}(D^2_g\lambda_{\alpha})$ and along which $\lambda_{\alpha}$ is constant. Orthogonal to its kernel, $D^2_g\lambda_{\alpha}$ has a uniform negative upper bound.
By a Taylor expansion argument along the lines of \cite[Thm.\ 7.1]{DK24} (where the case of manifolds with boundary is done), one proves that $g$ is a local maximum of $\lambda_{\alpha}$ on $\mathcal{M}_1$, and therefore, it is scalar curvature rigid by \cite[Lem.\ 5.2]{DK24}.
On the other hand, if $(M,g)$ is unstable, we consider the set
\[
\mathcal{C}_1=\left\{g\in\mathcal{M}_1\mid\scal_g\equiv const\right\}. 
\]
 In \cite{Koi79}, Koiso has shown that if $(M,g)$ is a unit-volume Einstein manifold which is not isometric to a round sphere, $\mathcal{C}_1$ is a manifold near $g$ with tangent space
\begin{gather*}
T_g\mathcal{C}_1=\left\{\mathcal{L}_Xg\mid X\in \Gamma(TM)\right\}\oplus TT.
\end{gather*}
Therefore, if $h$ is a TT-tensor such that $(\Delta_Eh,h)_{L^2}<0$, we find a curve $g_t\in\mathcal{C}_1$ such that
\begin{gather*}
\frac{d}{dt}S(g_t)|_{t=0}=0,\qquad \frac{d^2}{dt^2}S(g_t)|_{t=0}=-\frac{1}{2}(\Delta_Eh,h)_{L^2}>0,
\end{gather*}
so that for small $t\neq 0$, $\scal_{g_t}=S(g_t)>S(g)=\scal_g$. Because all $g_t$ have the same volume, this implies that $g$ is not scalar curvature rigid.
\end{Remark}
\begin{Remark}
Stability is a definition of \emph{infinitesimal} nature and often referred to as linear stability in the literature. Theorem \ref{thm:cpt_scr} demonstrates that scalar curvature rigidity is the corresponding \emph{local} definition. Furthermore, under some additional assumptions, scalar curvature rigidity is equivalent to dynamical stability under Ricci flow, see \cite{DK24} and references therein.
\end{Remark}



Note that Definition \ref{defn:scr} does assume neither compactness nor completeness. To formulate an analouge of Theorem \ref{thm:cpt_scr} for open manifolds, we extend the definition of stability by saying that a (possibly open) Einstein manifold is stable if there exists a constant $C>0$ such that $\Delta_Eh|_{TT}\geq C$ holds in the $L^2$-sense, that is, $(\Delta_Eh,h)_{L^2}\geq C\left\|h\right\|_{L^2}^2$ for all tt-tensors $h$ with compact support. It is semi-stable, if $(\Delta_Eh,h)_{L^2}\geq0$ for all tt-tensors $h$ with compact support. 
The analogue of Theorem \ref{thm:cpt_scr} for open manifolds now reads as follows:
\begin{Theorem}[\cite{DK24}]\label{thm:open_scr}
Let $(M,g)$ be an open Einstein manifold of nonpositive scalar curvature which is semi-stable. Then it is scalar curvature rigid.
\end{Theorem}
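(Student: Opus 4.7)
The strategy is to adapt the argument of Remark \ref{rem:stability_scr} to the open setting: in place of the global first eigenvalue $\lambda_\alpha$, we work with the total scalar curvature on a large compact domain containing the perturbation region. The crucial simplification in the open case is that the Obata--Lichnerowicz-type spectral hypothesis needed on a compact $M$ is automatically satisfied as soon as $\scal_g\leq 0$, so no global spectral assumption on $(M,g)$ is required.

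Assume toward contradiction that $(M,g)$ is not scalar curvature rigid: there exists $\bar g$ near $g$ with $\bar g=g$ off a compact set $K$, $\volume(K,\bar g)=\volume(K,g)$, and $\scal_{\bar g}\geq \scal_g$ pointwise, strictly at some $p\in K$. Choose a compact $\Omega\subset M$ with $K\subset\mathrm{int}(\Omega)$ and consider the functional
\[
S_\Omega(g'):=\int_\Omega \scal_{g'}\,dV_{g'}
\]
on the slice $\mathcal{S}$ of metrics $g'$ with $g'=g$ on $\Omega\setminus K$ and $\volume(\Omega,g')=\volume(\Omega,g)$. Writing $\scal_g\equiv c\leq 0$, the volume constraint together with the pointwise scalar curvature inequality yields
\[
S_\Omega(\bar g)-S_\Omega(g)=\int_\Omega(\scal_{\bar g}-c)\,dV_{\bar g}+c\bigl(\volume(\Omega,\bar g)-\volume(\Omega,g)\bigr)>0,
\]
so it suffices to prove that $g$ is a local maximum of $S_\Omega|_{\mathcal{S}}$.

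The metric $g$ is a critical point of $S_\Omega$ on $\mathcal{S}$ because it is Einstein with constant scalar curvature. Decompose a variation $h$ supported in $K$ via a Berger--Ebin-type splitting as $h=h^{TT}+fg+\mathcal{L}_Xg$ with $X$ and $f$ compactly supported in $\Omega$; the gauge summand contributes nothing to $D^2_gS_\Omega$ by diffeomorphism invariance. On the tt-summand one has $D^2_gS_\Omega(h^{TT},h^{TT})=-\tfrac12(\Delta_Eh^{TT},h^{TT})_{L^2(\Omega)}\leq 0$ by the semi-stability hypothesis. For the conformal direction, a direct computation in the spirit of \cite[Sec.\ 5.4]{DK24}, after imposing the linearized volume constraint $\int_\Omega f\,dV=0$, gives an expression of the form $-A\int_\Omega |df|^2\,dV+Bc\int_\Omega f^2\,dV$ with $A,B>0$, which is manifestly nonpositive when $c\leq 0$; this is where the Obata--Lichnerowicz-type estimate needed in the compact case becomes vacuous.

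The main obstacle will be to upgrade the pointwise nonpositivity of $D^2_gS_\Omega$ on $\mathcal{S}$ into genuine local maximality in the semi-stable (as opposed to strictly stable) case. Kernel directions correspond to compactly supported infinitesimal Einstein deformations, along which higher-order obstructions could in principle destroy the local maximum property. A Taylor-expansion argument in the spirit of \cite[Thm.\ 7.1]{DK24} should handle this: the combination of $\scal_g\leq 0$ with semi-stability, via a Weitzenb\"ock identity for $\Delta_E$ together with a unique-continuation argument, is expected to preclude nontrivial compactly supported elements of $\ker\Delta_E|_{TT}$, restoring strict negativity of the quadratic form transverse to the gauge directions and allowing the cubic and higher corrections to be absorbed. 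The resulting local maximality of $g$ contradicts $S_\Omega(\bar g)>S_\Omega(g)$, completing the proof.
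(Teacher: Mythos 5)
The central step of your argument fails because of a sign error in the conformal direction, and this error is fatal to the choice of functional. For the plain total scalar curvature on a domain, the second variation at an Einstein metric along a compactly supported, volume-corrected conformal direction $fg$ is (up to positive constants) $+(n-1)\int|df|^2\,dV-\scal_g\int f^2\,dV$, not $-A\int|df|^2\,dV+Bc\int f^2\,dV$ as you claim: one checks this directly by expanding $\scal_{e^{2\phi}g}=e^{-2\phi}(\scal_g+2(n-1)\Delta\phi-(n-1)(n-2)|d\phi|^2)$ and $dV_{e^{2\phi}g}=e^{n\phi}dV_g$ to second order and correcting the volume inside $K$. Thus when $c=\scal_g\le 0$ this Hessian is \emph{strictly positive} on every nonzero compactly supported conformal direction, so $g$ is never a local maximum of $S_\Omega$ on your slice $\mathcal{S}$, and the contradiction you aim for cannot be reached. (This is exactly the classical fact, recalled in the introduction, that the coindex of $D^2_gS$ is infinite; the coindex directions are conformal and can be taken compactly supported, so the problem does not disappear by localizing to $\Omega$.) The compact-case argument of Remark \ref{rem:stability_scr} avoids precisely this issue by replacing $S$ with the eigenvalue functional $\lambda_\alpha$ of $4\alpha\Delta+\scal$, for which the conformal contribution to the Hessian has the favorable sign under the stated spectral condition; any adaptation to the open case must likewise use such a functional (or otherwise eliminate the conformal directions), which is what \cite{DK24} does.

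Two further points. First, your Berger--Ebin splitting of a tensor supported in $K$ into pieces ``compactly supported in $\Omega$'' is not automatic: on a domain the splitting requires boundary conditions and the TT part of a compactly supported tensor need not be compactly supported, so applying the semi-stability hypothesis to $h^{TT}$ needs justification. Second, your last paragraph leaves the decisive quantitative step (``is expected to preclude\dots'') unproven; note that the relevant mechanism is simpler than a Weitzenb\"ock argument and does not use $\scal_g\le0$ at all: since perturbations are supported in a compact $K$, one chooses $K\subset N\subset M$ with $N$ relatively compact, and by (strict) domain monotonicity of the smallest Dirichlet eigenvalue of $\Delta_E$, combined with semi-stability of $M$ and unique continuation, the operator is strictly positive on TT-tensors over $N$ --- this is the reason, given in the paper, why no integrability hypothesis is needed in the open case. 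Finally, be aware that the paper does not reprove this theorem; it is quoted as a consequence of Theorem 1.9 and Remark 1.10 of \cite{DK24}, so a complete self-contained proof would in any case have to reproduce the domain-with-boundary analysis carried out there.
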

This result is a consequence of Theorem 1.9 and Remark 1.10 in \cite{DK24}. The full form of \cite[Thm. 1.9]{DK24} states an equivalence of linear stability and scalar curvature rigidity without any restriction to the scalar curvature, provided that some additional conditions do hold. We formulated Theorem \ref{thm:open_scr} in this form in order to keep the statement simple.
Note that in constrast to the compact case, we do not need to assume the integrability condition. The essential reason is that according to Definition \ref{defn:scr}, we only need to consider perturbations of $g$ which are supported on compact subsets $K\subset M$.
Because $M$ is open, we find for each compact $K\subset M$ another open subset $N$ such that $K\subset N\subset M$. By domain monotonicity of the smallest Dirichlet eigenvalue, $N$ is stable and thus not contains any infintesimal deformations.


In this paper, we show  that the integrability condition in Theorem \ref{thm:cpt_scr} is essential:
\begin{Theorem}\label{thm:nonint_nonscr}
Let $(M,g)$ be a compact Einstein manifold and suppose that it admits infinitesimal deformations which are not integrable of second order. Then $(M,g)$ is not scalar curvature rigid.
\end{Theorem}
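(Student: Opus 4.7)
The strategy is to construct a smooth curve $g_t$ of unit-volume, constant-scalar-curvature metrics through $g$ whose (constant) scalar curvature strictly exceeds $\scal_g$ for small $t>0$. Since $\volume(M,g_t)=\volume(M,g)$ and $\scal_{g_t}>\scal_g$ everywhere on $M$, Definition \ref{defn:scr} is then violated with $K=M$.

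First I would reduce to the existence of a single $h\in\epsilon(g)$ with $\Psi(h)>0$. By \cite[Lem.\ 4.8]{Koi82} there are $h_0,h_1\in\epsilon(g)$ with $(D^2_g E(h_0,h_0),h_1)_{L^2}\neq 0$. On $TT$, $E_g$ agrees, up to a multiple of $g$ (which pairs trivially with tt-tensors), with minus the $L^2$-gradient of $S|_{\mathcal{M}_1}$; consequently the trilinear form $(a,b,c)\mapsto(D^2_g E(a,b),c)_{L^2}$ is a nonzero scalar multiple of $D^3_g S|_{\mathcal{M}_1}(a,b,c)$ and is thus totally symmetric. Expanding
\[
\Psi(h_0+sh_1)=\Psi(h_0)+3s(D^2_g E(h_0,h_0),h_1)_{L^2}+O(s^2),
\]
some linear combination $h=h_0+sh_1\in\epsilon(g)$ (or its negative) satisfies $\Psi(h)>0$.

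Since $\epsilon(g)\neq\{0\}$, $(M,g)$ is not isometric to the round sphere, so by Koiso \cite{Koi79} the set $\mathcal{C}_1$ is a smooth submanifold near $g$ whose tangent space contains $TT$. Pick a smooth curve $g_t\in\mathcal{C}_1\cap\mathcal{M}_1$ with $g_0=g$ and $\dot g_0=h$. Along such a curve $\scal_{g_t}=S(g_t)$ is constant on $M$ for each $t$, and the first two derivatives of $S(g_t)$ at $t=0$ vanish: the first since $g$ is a critical point of $S|_{\mathcal{M}_1}$, the second since $D^2_g S|_{\mathcal{M}_1}(h,h)=-\tfrac12(\Delta_E h,h)_{L^2}=0$ for $h\in\epsilon(g)$. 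The heart of the argument is the identity
\[
\tfrac{d^3}{dt^3}\big|_{t=0}S(g_t)=c\cdot\Psi(h)
\]
for a fixed nonzero universal constant $c$, valid for any such curve. Granting it, $\scal_{g_t}=\scal_g+\tfrac{c t^3}{6}\Psi(h)+O(t^4)>\scal_g$ for small $t>0$.

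The main obstacle is the cubic identity. The expansion
\[
\tfrac{d^3}{dt^3}\big|_0 S(g_t)=D^3_g S(h,h,h)+3D^2_g S(h,\ddot g_0)+D_g S(\dddot g_0)
\]
a priori involves the acceleration $\ddot g_0$ and the jerk $\dddot g_0$ of the curve, so one must show that modulo corrections fixed by the constant-scalar-curvature and unit-volume constraints (which determine $\int\tr(\ddot g_0)\dv$ and $\int\tr(\dddot g_0)\dv$ in terms of $h$), the remaining content reduces to a nonzero multiple of $\Psi(h)$. Equivalently, one can invoke the Kuranishi-type local model of the Einstein moduli space to identify $\Psi(h)$ as the third-order Taylor coefficient of $\scal$ on a finite-dimensional slice through $g$. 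The required cancellations crucially rely on $\Delta_E h=0$ and the self-adjointness of $\Delta_E$ on $TT$; once they are in place, our sign choice for $\Psi(h)$ yields the desired positive scalar-curvature deformation and completes the proof.
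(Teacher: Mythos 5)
Your overall strategy is the same as the paper's: polarize the second-order obstruction to produce $h\in\epsilon(g)$ with $\Psi(h)\neq0$, take a curve in Koiso's manifold $\mathcal{C}_1$ tangent to $h$, and use a third-variation identity to raise the (constant) scalar curvature at fixed volume, violating Definition \ref{defn:scr} with $K=M$. However, the two statements that carry the entire proof are asserted rather than proven, and you flag this yourself: the total symmetry of $(a,b,c)\mapsto(D^2_gE(a,b),c)_{L^2}$ enters via a bare ``consequently'', and the cubic identity $\frac{d^3}{dt^3}\big|_{t=0}S(g_t)=c\,\Psi(h)$ is introduced with ``granting it'' and then labelled ``the main obstacle''. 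The ``consequently'' is a genuine gap as stated: because the $L^2$ inner product and the volume element depend on the metric, the second derivative of a gradient paired with $c$ is \emph{not} in general a third derivative of the underlying functional; equating the two produces correction terms involving $E_g$, $D_gE$, derivatives of the inner product and volume form, and (if one insists on working with $S|_{\mathcal{M}_1}$) constraint terms. Showing that all of these vanish when the arguments lie in $\epsilon(g)$ is precisely the content of Lemma \ref{lem:linearizations_and_Lambda} and of the computations in Proposition \ref{lem:nonint_2nd_order} and Lemma \ref{lem_third_variation}; your plan names the needed cancellations ($\Delta_Eh=0$, self-adjointness) but does not carry them out, and the alternative you suggest (a Kuranishi-type model of the Einstein moduli space) is both heavier and delicate here, since the deformation space is exactly the obstructed object the theorem is about.

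The device you are missing, which makes both computations short, is to replace $S|_{\mathcal{M}_1}$ by the unconstrained functional with cosmological constant $S_{\lambda}(g)=\int_M(\scal_g+(2-n)\lambda)\dv_g$, whose unconstrained gradient is $F_g=\Ric_g-\tfrac12\scal_g\,g-\tfrac12(2-n)\lambda g$ and vanishes at the Einstein metric. Then in $\frac{d^3}{dt^3}S_{\lambda}(g_t)$ every term containing $\ddot g_0$ or $\dddot g_0$ is paired with $F_g=0$ or with $D_gF(h)=0$ (using the symmetry of $D_gF$, itself obtained from a mixed second derivative of $S_\lambda$), so no bookkeeping of $\int\tr\,\ddot g_0\dv$ and $\int\tr\,\dddot g_0\dv$ through the constraints is needed; since the curve lies in $\mathcal{C}_1$ and has fixed volume, the derivatives of $S$ and $S_{\lambda}$ along it coincide. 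The same trick proves the symmetry statement, by identifying $(D^2_gE(h,k),l)_{L^2}$ with the mixed third derivative of $S_{\lambda}(g+th+sk+rl)$ at $0$ and invoking Schwarz's theorem. Once these two facts are in place, the remainder of your argument (Koiso's $T_g\mathcal{C}_1\supset TT$, the sign choice, and the conclusion) is exactly the paper's proof.
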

The failure of integrability of second order allows us to construct a curve $g_t$ of metrics of volume $1$ and constant scalar curvature along which we can increase $S(g_t)$, which is by the assumptions on $g_t$ equal to the constant function $\scal_{g_t}$.
This will be shown in Section \ref{sec:non-integrable}.
\subsection{Special holonomy metrics}\label{subsec:special_holonomy}
An oriented Riemannian manifold $(M,g)$ is said to be of special holonomy if its holonomy group is a proper subgroup of $SO(n)$. If $(M,g)$ is simply-connected, irreducible and nonsymmetric, there are six possible types of holonomy groups. For five of them (with the K\"{a}hler case as the only exception), $(M,g)$ is nessecarily Einstein.
Conversely, many Einstein metrics are of special holonomy.  It is thus natural to consider the questions of stability and scalar curvature rigidity specifically for special holonomy metrics. For all holonomy types except one, answers are already given in the literature and we give a brief overview over these results in the following. 
In this article, we study stability and scalar curvature rigidity of the remaining holonomy type, which are the quaternion-K\"{a}hler manifolds.

\subsubsection{Ricci flat metrics}\label{subsubsecion:hol_Ricci_flat}
In the Ricci-flat case, every special holonomy metric has a universal cover which admits a parallel spinor. Thus, they are semi-stable by \cite{DWW05}. This follows essentially from \cite{Wang91}, although it is not written there explicitly.
 The integrability for compact manifolds of special holonomy is also a well-known result, see \cite{Wang91}. Therefore, all these metrics are scalar curvature rigid by Theorem \ref{thm:cpt_scr}. All known examples of compact Ricci-flat manifolds are of special holonomy and it is a long-standing open question whether other examples exist.

\subsubsection{K\"{a}hler-Einstein metrics of nonzero scalar curvature}
Using $spin^c$-geometry, Dai, Wang and Wei showed that K\"{a}hler-Einstein metrics of negative scalar curvature are semi-stable \cite{DWW07}. This already follows from work by Koiso (\cite{Koi83}, see also the discussion in \cite[p. 361--364]{besse}), but is not stated there explicitly. 

It is still open whether K\"{a}hler-Einstein metrics of negative scalar curvature are integrable. It has however been shown by Nagy and the second author in \cite{NS23} that all infinitesimal Einstein deformations of such metrics are integrable of second order.
In dimension four, this also follows from combining results of LeBrun, Dahl and the first author with Theorem \ref{thm:nonint_nonscr}:
By \cite{LeB99}, compact four-dimensional K\"{a}hler-Einstein manifolds of negative scalar curvature maximise the Yamabe invariant. Therefore by \cite[Theorem 1.1]{DK24}, they are scalar curvature rigid. On the other hand, if there were infitesimal Einstein deformations which are not integrable of second order, Theorem \ref{thm:nonint_nonscr} would imply that the metric is not scalar curvature rigid, which leads to a contradiction.

Because the above mentioned result by Nagy and the second author \cite{NS23} holds in any dimension, this raises the following question.
\begin{Question}
Are all compact  K\"{a}hler-Einstein manifolds with negative Einstein constant scalar curvature rigid?
\end{Question}
On the other hand, K\"{a}hler-Einstein manifolds of positive scalar curvature and $h_{1,1}(M)>1$ are unstable and therefore not scalar curvature rigid (see \cite{CHI}). The easiest way to construct such an example is by taking a product of two K\"{a}hler-Einstein manifolds. The above mentioned example $S^2\times\mathbb{C}P^{2n}$ is a positive K\"{a}hler-Einstein manifold whose infinitesimal deformations are all non-integrable.

\subsubsection{Symmetric spaces}\label{sym}
Let $M=G/K$ be an irreducible symmetric space with the metric $g$ induced by the Killing form of $G$. The holonomy of the Levi-Civita connection can
be identified with $K$, i.e. we may consider symmetric spaces as manifolds with special holonomy. Then $g$ is Einstein with non-negative sectional
curvature if $M$ is of compact type and with non-positive sectional curvature if $M$ is of non-compact type. In the later case the metric $g$ is stable (see
\cite[12.73]{besse}). If $M$ is of compact type then the stability type was decided by Koiso in   \cite{Koi80}, see also \cite{SW2} for the treatment of a
few remaining cases. It turns out that all symmetric spaces of compact type are semi-stable, with the exception of the following unstable cases:
$\Sp(r), \Sp(n)/\U(n)$ and the Grassmannians $\Gr_2 (\R^5)$ and $\Gr_r(\H^{r+s})$ for $r,s \ge 2$. Infinitesimal Einstein deformations exist on the
following spaces:
$$
\SU(n), \;  \;   \SU(n)/\SO(n),  \;  \;    \SU(2n)/\Sp(n),    \;  \;   \SU(p+q)/\mathrm S(\U(p)\times \U(q)), \;  \;    \mathrm E_6/\mathrm F_4
$$
where $n\ge 3$ and $p,q \ge 2$. For the four $\SU(m)$ quotients in this list the space of infinitesimal Einstein deformations integrable of second order
can be explicitly described and turns out to be a proper subspace of $\epsilon(g)$ (see \cite{BHMW} for $\SU(n)$, \cite{HSS1} for $ \SU(n)/\SO(n)$ and $  \SU(2n)/\Sp(n) $
and \cite{HSS2, NS23} for $\SU(p+q)/\mathrm S(\U(p)\times \U(q))$). It follows from  Theorem \ref{thm:nonint_nonscr} that all these spaces are not scalar curvature rigid.
For the space  $ \mathrm E_6/\mathrm F_4 $ it is known (see \cite{Koi82}) that all infinitesimal Einstein deformations are  integrable of second order. Hence, this
is the only symmetric space  for which it is still not known whether it is scalar curvature rigid or not.

\medskip

Note that reducible symmetric spaces are unstable, as it is always the case for products of Einstein manifolds. 

\subsubsection{Quaternion-K\"{a}hler manifolds}
Recall that quaternion-K\"ahler manifolds are defined
by the condition that the holonomy is a subgroup of $\Sp(1)\cdot\Sp(n) \subset \SO(4n)$. 
For $n \ge 2$, they are  automatically Einstein. Furthermore, they are
deRham irreducible if the scalar curvature is different from zero. If the scalar curvature
vanishes the holonomy reduces further to $\Sp(n)$, i.e.\ the manifold is then hyper-K\"ahler (see \cite[Chap.\ 14]{besse} for further details).
Our main result in the case of negative scalar curvature is as follows.

\begin{Theorem}\label{qk-stable}
Every quaternion-K\"{a}hler manifold $(M,g)$ of negative scalar curvature is stable. In particular, it is scalar curvature rigid and, if $M$ is compact, the metric $g$ is non-deformable as an Einstein metric.
\end{Theorem}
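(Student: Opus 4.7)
The rigidity and non-deformability claims reduce formally to the stability assertion: stability forces $\epsilon(g)=\{0\}$, which renders the integrability hypothesis of Theorem \ref{thm:cpt_scr} vacuous in the compact case and immediately yields non-deformability, while in the open case Theorem \ref{thm:open_scr} applies directly (stability is strictly stronger than the semi-stability used there). The real task is therefore the spectral lower bound $(\Delta_E h,h)_{L^2}\geq C\|h\|_{L^2}^2$ with $C>0$ for every compactly supported tt-tensor $h$.

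My plan is to exploit the holonomy reduction $\Hol(g)\subseteq\Sp(1)\cdot\Sp(n)$. Identifying $TM\otimes\C\cong E\otimes H$ with $E$ and $H$ the standard quaternionic bundles of complex ranks $2n$ and $2$, the complexified bundle of symmetric $2$-tensors decomposes into $\Sp(1)\cdot\Sp(n)$-irreducible subbundles
\begin{equation*}
\Sym^2(T^*M)\otimes\C \;=\; \C\cdot g \;\oplus\; \Lambda_0^2 E \;\oplus\; (S^2 E\otimes S^2 H).
\end{equation*}
Since $\nabla^*\nabla$ and $\mathring{R}$ are both $\Sp(1)\cdot\Sp(n)$-equivariant, the Einstein operator preserves this splitting, and it suffices to bound it from below on the two non-trivial summands separately (the trace part contains no tt-tensor).

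To analyse $\mathring{R}$ I would use the canonical decomposition of the curvature of a QK manifold, $R=R^{\hyper}+\nu\,R_{\H P^n}$, where $\nu$ is a universal positive multiple of $\scal_g$, $R^{\hyper}$ is a pointwise section of the bundle of algebraic hyper-K\"ahler curvature tensors, and $R_{\H P^n}$ is the fibrewise model curvature of the symmetric space $\H P^n$. Correspondingly $\mathring{R}=\mathring{R}^{\hyper}+\nu\mathring{R}_{\H P^n}$. The model endomorphism $\mathring{R}_{\H P^n}$ acts as a representation-theoretic constant on each irreducible summand (computable by testing directly on $\H P^n$). For the hyper-K\"ahler piece, I would establish a key Weitzenb\"ock identity
\begin{equation*}
\int_M \langle(\nabla^*\nabla - 2\mathring{R}^{\hyper})h,h\rangle\,dV \;\geq\; 0
\end{equation*}
for compactly supported tt-tensors $h$, using a first-order quaternionic twistor operator in the spirit of Salamon's quaternionic complex. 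Combining this with the strictly positive contribution $-2\nu\mathring{R}_{\H P^n}$ (of order $|\scal_g|$, with the correct sign because $\nu<0$) on each non-trivial summand would yield the uniform spectral gap $(\Delta_E h,h)_{L^2}\geq c\,|\scal_g|\,\|h\|_{L^2}^2$.

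The main obstacle, I expect, is the Weitzenb\"ock estimate for $\nabla^*\nabla-2\mathring{R}^{\hyper}$: because $R^{\hyper}$ only sees the $\Sp(n)$-factor of the holonomy, whereas the summand $S^2E\otimes S^2H$ couples $E$ and $H$ nontrivially, the hyper-K\"ahler semi-stability of Ricci-flat metrics recalled in Subsection \ref{subsubsecion:hol_Ricci_flat} does not plug in directly. Producing the required nonnegativity amounts to finding a first-order operator matched to the $\Sp(1)\cdot\Sp(n)$-irreducible decomposition of $\Sym^2(T^*M)$, and is the technical heart of the argument.
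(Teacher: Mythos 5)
Your reduction of the rigidity and non-deformability claims to the spectral gap is correct, and your starting decomposition $\Sym^2 \T^*M\otimes\C\cong\C\cdot g\oplus\Lambda^2_0\EM\oplus(\Sym^2\EM\otimes\Sym^2\HM)$ is exactly the one the paper uses. However, your argument has a genuine gap at precisely the point you flag as the ``technical heart'': the Weitzenb\"ock nonnegativity
\[
\int_M\langle(\nabla^*\nabla-2\mathring{R}^{\hyper})h,h\rangle\,dV\;\geq\;0
\]
for compactly supported tt-tensors is asserted but not established, and it does not follow from anything you cite. The semi-stability of genuine hyper-K\"ahler metrics (Dai--Wang--Wei) rests on parallel spinors and a Dirac-operator identity for the Levi-Civita connection of a Ricci-flat metric; here $R^{\hyper}$ is merely a pointwise algebraic curvature tensor of hyper-K\"ahler type, while $\nabla$ is the quaternion-K\"ahler Levi-Civita connection, so no parallel spinor and no twistor/Dirac sequence is available off the shelf. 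You would also need to verify the sign and magnitude of the constant by which $\mathring{R}_{\H P^n}$ acts on the summand $\Sym^2\EM\otimes\Sym^2\HM$ (which couples $\EM$ and $\HM$), and this is not done. As it stands the proof is a program, not a proof.

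The paper's argument sidesteps the curvature decomposition entirely. It observes that all three summands of $\Sym^2\T^*M^\C$ also occur as $\Sp(1)\cdot\Sp(n)$-invariant summands of $\Lambda^4\T^*M^\C$, hence there is a \emph{parallel} isometric bundle embedding $\Phi:\Sym^2\T^*M\to\Lambda^4\T^*M$. Because the standard Laplace operator $\nabla^*\nabla+q(R)$ commutes with parallel bundle maps and restricts to the Lichnerowicz Laplacian on symmetric tensors and to the Hodge Laplacian on forms, one gets $\Phi\circ\Delta_L=\Delta_H\circ\Phi$, so $\Delta_L\geq0$ on all of $\Sym^2\T^*M$; then $\Delta_E=\Delta_L-2\tfrac{\scal}{4n}\geq-2\tfrac{\scal}{4n}>0$ when $\scal<0$. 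No Weitzenb\"ock estimate for a hyper-K\"ahler piece and no eigenvalue computation on $\H P^n$ is needed. If you want to salvage your route, the missing first-order operator adapted to the $\Sp(1)\cdot\Sp(n)$-decomposition is exactly what the passage through $\Lambda^4$ replaces: $d$ and $d^*$ restricted to the image of $\Phi$ already do the job.
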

Using representations of the Holonomy group, we construct an parallel bundle embedding $\Phi:\mathrm{Sym}^2T^*M\to \Lambda^4 T^*M$ via which $\Delta_E+2\frac{\scal_g}{\dim M}$ corresponds to the (nonnegative) Hodge-Laplace operator. This construction will be explained in Section \ref{sec_stability}.

\medskip

The first examples of quaternion-K\"ahler manifolds of negative scalar are the noncompact Wolf spaces. These are symmetric spaces and there is one such
space for each simple  Lie algebra, e.g. the quaternionic hyperbolic  space $\H H^n$. There are many more examples with negative 
scalar curvature, homogeneous and non-homogeneous, see e.g. \cite{C1, C2} and references therein. However, the only known compact examples are compact quotients of 
the non-compact Wolf spaces. 

\medskip

In the case of positive scalar curvature the compact Wolf spaces are the only known examples and it is an open question whether there are other examples. 
All these symmetric examples are stable, with the exception of the complex $2$-plane  Grassmannian $\mathrm{Gr}_2(\C^{n+2})$  which is semi-stable, i.e. it admits 
infinitesimal Einstein deformations (see \cite{SW2}) and, as already mentioned in  Subsection \ref{sym}, some of them are not integrable of second order. Thus we have

\begin{Corollary}\label{thm:qk_psc}
There are quaternion-K\"{a}hler manifolds of positive scalar curvature which are not scalar curvature rigid.
\end{Corollary}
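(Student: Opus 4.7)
The plan is to produce a specific example of a quaternion-K\"ahler manifold of positive scalar curvature that satisfies the hypotheses of Theorem \ref{thm:nonint_nonscr}, and then invoke that theorem. The natural candidate, pointed out just before the corollary, is the complex $2$-plane Grassmannian $\mathrm{Gr}_2(\C^{n+2}) = \SU(n+2)/\mathrm S(\U(2)\times \U(n))$ for $n \ge 2$, which is one of the compact Wolf spaces and therefore carries a quaternion-K\"ahler Einstein metric $g$ of positive scalar curvature.

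First, I would recall from Subsection \ref{sym} that $\mathrm{Gr}_2(\C^{n+2})$ appears in the list of symmetric spaces of compact type admitting infinitesimal Einstein deformations: by \cite{SW2}, we have $\epsilon(g)\neq 0$. Second, the $\SU(p+q)/\mathrm S(\U(p)\times \U(q))$ family with $p,q\ge 2$ is precisely one of the cases discussed in Subsection \ref{sym} where, by \cite{HSS2, NS23}, the subspace of infinitesimal Einstein deformations that are integrable of second order is \emph{proper} in $\epsilon(g)$. Specializing to $p=2$, $q=n$ with $n\ge 2$, this gives at least one $h\in\epsilon(g)$ for $\mathrm{Gr}_2(\C^{n+2})$ which fails to be integrable of second order; equivalently, the Koiso obstruction $\Psi(h)$ does not vanish.

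Third, with such an $h$ in hand, Theorem \ref{thm:nonint_nonscr} applies directly to the compact Einstein manifold $(\mathrm{Gr}_2(\C^{n+2}), g)$ and yields that $g$ is not scalar curvature rigid, proving the corollary.

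The only real subtlety is bookkeeping: one must confirm that the Grassmannian in question, viewed as $\SU(n+2)/\mathrm S(\U(2)\times \U(n))$, falls inside the parameter range $p,q\ge 2$ covered by the cited non-integrability results, which it does as soon as $n\ge 2$. No new analytic work is required; the corollary is essentially a matching-of-examples statement between the preceding subsections and Theorem \ref{thm:nonint_nonscr}.
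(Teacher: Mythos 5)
Your proposal is correct and follows exactly the paper's own reasoning: the corollary is obtained by observing that the Wolf space $\mathrm{Gr}_2(\C^{n+2})=\SU(n+2)/\mathrm{S}(\U(2)\times\U(n))$ is a positive quaternion-K\"ahler manifold admitting infinitesimal Einstein deformations (\cite{SW2}), some of which fail to be integrable of second order (\cite{HSS2, NS23}), so Theorem \ref{thm:nonint_nonscr} applies. No discrepancies with the paper's argument.
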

We see a pattern for quaternion-K\"{a}hler manifolds which is similar to other special holonomy Einstein manifolds and with our results, we can summarize the discussion in Subsection \ref{subsec:special_holonomy} as follows: All Einstein metrics of special holonomy and negative scalar curvature are semi-stable. Among Einstein manifolds of positive scalar curvature, we find for each type of special holonomy type (K\"{a}hler, quaternion-K\"{a}hler, symmetric spaces) exmples which are not scalar curvature rigid.
%

%
%
\section{Stability of quaternionic K\"{a}hler manifolds}\label{sec_stability}

\subsection{The standard Laplace operator}
Let $(M^n, g)$ be an oriented Riemannian manifold and let $\Hol:= \mathrm{Hol}(g) \subset \SO(n)$
be the holonomy group of the Levi-Civita connection, with holonomy reduction
$P_\Hol\subset P_{\SO(n)}$, where $P_{\SO(n)}$ is the frame bundle of $(M^n, g)$. 
Then any geometric vector bundle $EM$ on $M$ is associated
to $P_\Hol$ via some representation $\rho: \Hol \rightarrow \Aut (E)$, e.g.
the tangent bundle $TM$ is associated to the standard representation on $T=\R^n$.

Let $\hol(M) \subset \Lambda^2 T M$ be the holonomy algebra bundle defined as associated vector 
bundle via the adjoint representation of $\Hol$ on its Lie algebra $\hol$. 
The differential of the representation $\rho$
 defines fibrewise a parallel bundle map $* : \hol(M) \otimes EM \rightarrow EM$.
We denote with $\nabla$ the connection on sections of $EM$ induced by the Levi--Civita
 connection of $g$. Its  curvature is defined as $R^\nabla_{X, Y} = \nabla^2_{X, Y}
 - \nabla^2_{Y, X}$ for any tangent vectors $X, Y$. It is well-known
 that $R^\nabla_{X, Y} \in \hol (M)$. In particular, the curvature  can be written as $R^\nabla_{X, Y} = R_{X,Y} \circ *$,
 where $R$ is the curvature of the Levi-Civita connection of $g$.
  
The parallel orthogonal projection map $\pr_\hol: \Lambda^2 TM \rightarrow  \hol(M)  \subset \Lambda^2 TM$
allows to define the standard curvature endomorphism for every geometric vector bundle:
$$
q(R) := \frac12 \sum_{i,j} \pr_\hol (e_i \wedge e_j) * R_{e_i, e_j} *  \; \in \End (EM) \ .
$$
The definition of $q(R)$ is independent of the choice of a local orthonormal frame $\{e_i\}$. The
main example comes from the classical Weitzenb\"ock formula for the Hodge-Laplace operator
$$
\Delta = d d^* + d^* d = \nabla^*\nabla + q(R) \ . 
$$
In particular, is $q(R)$  the Ricci tensor on $1$-forms. On symmetric $2$-tensors we have
$$
q(R) = 2 \mathring{R} + 2 \Ric \ ,
$$
where $\Ric$ acts as a derivation, e.g. as $2\lambda$ on an Einstein manifold with Einstein constant $\lambda$.

The standard Laplace operator $\Delta = \Delta_\rho$ acting on sections of a geometric
vector bundle $EM$ is defined as the sum $\Delta_\rho = \nabla^* \nabla + q(R)$. Then
$\Delta_\rho$ coincides with the Hodge-Laplace operator $\Delta$  if $\rho$ is the 
restriction to $\Hol$ of the standard representation of $\SO(n)$  on forms and similarly with the 
Lichnerowicz Laplacian $\Delta_L$ on symmetric tensors  if $\rho$ is the restriction of
the standard representation of $\SO(n)$ on symmetric tensors. The notation standard Laplace operator
was introduced in \cite{SW1}. A similar construction can be found in \cite[Ch. 1.I]{besse}

The most important property of the standard Laplacian $\Delta_\rho$ is that it commutes with
parallel bundle maps, i.e. with maps induced by $\Hol$-equivariant maps between $\Hol$-representations.
In particular, if $EM \subset \Lambda^k T^*M$ is a parallel subbundle, then the restriction  of the Hodge-Laplace operator
to sections of  $EM$ coincides with the standard Laplace operator
$\Delta_\rho$ of the bundle $EM$. The same is true for any parallel subbundle of the
bundle of symmetric tensors and the Lichnerowicz Laplacian. As a consequence we have
$\Delta_L \ge 0$ on all parallel subbundles $EM \subset \Sym^* TM$ which are also parallel
subbundles of the form bundle $\Lambda^* T^*M$. Note, that $EM$ is a parallel subbundle
of $\Lambda^k T^*M$ if and only if $E \subset \Lambda^k T$ is an $\Hol$-invariant subspace 
with respect to the standard representation of $\SO(n)$ on $\Lambda^kT$ restricted to $\Hol$.

\subsection{Quaternion-K\"ahler manifolds of negative scalar curvature}

Let $(M^{4n}, g)$ be a quaternion-K\"ahler manifold. Then  $\Hol(g) \subset \Sp(1)\cdot\Sp(n) \subset \SO(4n)$
and equivalently there
are locally defined almost complex structures $I, J, K$ compatible with the
metric, satisfying the quaternionic relation $I J = K$ and spanning a globally
defined parallel subbundle of $\End (\T M)$. Such a triple $\{I, J, K\}$ is
called a local quaternionic frame.

\medskip

The standard 
representations of $\Sp(1)$ on $\HM := \mathbb H^1$ and of $\Sp(n)$
on $\EM:= \mathbb H^n$ give rise to locally defined vector bundles again
denoted with $\HM$ and $\EM$.  Any even number of factors in the tensor product leads to
globally defined bundles, e.g. the complexified tangent bundle can be 
written as $\T M^\C = \HM \otimes \EM$. Especially important will be the 
following decomposition.
\begin{Lemma}
The vector bundle of symmetric $2$-tensors decomposes into the direct sum of three
globally defined parallel subbundles:
\begin{equation} \label{deco}
\Sym^2 \T^* M^\C \;\cong\; (\Sym^2 \HM^* \otimes \Sym^2 \EM^*) \,\oplus\, \Lambda^2_0\EM^*  \;\oplus\; \C \ .
\end{equation}
Here $\Lambda^2_0 \EM$ denotes the space of primitive $2$-forms on $\EM$, i.e. $2$-forms orthogonal
to the symplectic form $\sigma_\EM$. The trivial bundle $\C$ is spanned by the metric $g= \sigma_H \otimes \sigma_E$.
\end{Lemma}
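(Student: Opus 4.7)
The plan is to obtain the decomposition directly from the standard tensor product structure $\T M^{\C} \cong \HM \otimes \EM$, which is a consequence of the holonomy reduction to $\Sp(1)\cdot\Sp(n)$. Since $\HM$ and $\EM$ carry invariant symplectic forms $\sigma_\HM$ and $\sigma_\EM$, they are self-dual as representations of $\Sp(1)$ and $\Sp(n)$ respectively, so dualising is not an essential issue and I can work on the level of $\T M^\C$ and then transfer to $\T^* M^\C$.

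First I would recall the general representation-theoretic identity
$$
\Sym^2(V \otimes W) \;\cong\; \bigl(\Sym^2 V \otimes \Sym^2 W\bigr) \,\oplus\, \bigl(\Lambda^2 V \otimes \Lambda^2 W\bigr),
$$
valid for any two vector spaces $V,W$, and apply it to $V=\HM$, $W=\EM$. Because $\HM$ is a $2$-dimensional symplectic vector space, $\Lambda^2 \HM$ is one-dimensional and spanned by $\sigma_\HM$, so
$$
\Sym^2(\HM \otimes \EM) \;\cong\; \bigl(\Sym^2 \HM \otimes \Sym^2 \EM\bigr) \,\oplus\, \Lambda^2 \EM.
$$
Next I would split $\Lambda^2 \EM$ using contraction with $\sigma_\EM$: the map $\alpha \mapsto \sigma_\EM \lr \alpha$ provides an $\Sp(n)$-equivariant projection onto the line $\C\cdot \sigma_\EM$, whose kernel is by definition the space $\Lambda^2_0 \EM$ of primitive forms. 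This yields the orthogonal splitting $\Lambda^2 \EM = \C \cdot \sigma_\EM \oplus \Lambda^2_0 \EM$ and hence the claimed three-summand decomposition.

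To identify the trivial summand with $\C\cdot g$, I would verify that under the isomorphism $\T^* M^\C \cong \HM^* \otimes \EM^*$ the tensor $\sigma_\HM \otimes \sigma_\EM$ corresponds, up to a scalar, to the symmetric $2$-tensor $g$. This is a direct check in a local quaternionic frame: if one writes a local basis of $\T M^\C$ as $h_i \otimes e_\alpha$ with $\sigma_\HM(h_1,h_2)=1$ and $\sigma_\EM$ the standard symplectic form on $\C^{2n}$, then the symmetrisation of $\sigma_\HM \otimes \sigma_\EM$ in $\HM^* \otimes \EM^* \otimes \HM^* \otimes \EM^*$ reproduces the metric tensor, because exchanging two symplectic pairs produces two sign changes.

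Finally, all three subspaces on the right are $\Sp(1)\cdot\Sp(n)$-invariant by construction, and since $\Hol(g) \subset \Sp(1)\cdot\Sp(n)$ they give rise to $\Hol$-invariant subspaces. By the holonomy principle, each of these yields a globally defined parallel subbundle of $\Sym^2 \T^* M^\C$, which completes the proof. The main obstacle is really only bookkeeping: making sure that the symmetric square of a tensor product is split correctly and that the symmetrisation of $\sigma_\HM \otimes \sigma_\EM$ is indeed (a nonzero multiple of) the metric, so that the trivial summand is identified as claimed.
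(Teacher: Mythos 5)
Your argument is correct and follows essentially the same route as the paper: the identity $\Sym^2(V\otimes W)\cong(\Sym^2V\otimes\Sym^2W)\oplus(\Lambda^2V\otimes\Lambda^2W)$ together with $\dim\Lambda^2\HM=1$ and the primitive splitting of $\Lambda^2\EM$ is exactly the ``general formula for Schur functors'' the authors invoke, and your explicit identifications (including $g=\sigma_H\otimes\sigma_E$ and the holonomy principle for parallelism) coincide with the embeddings they describe.
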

\proof
Decomposition \eqref{deco} corresponds to a decomposition
of the $\Sp(1)\cdot\Sp(n)$ representations $\Sym^2(\HM^* \otimes \EM^*)$ into irreducible summands.
Hence all three summands in the decomposition define parallel subbundles  of the bundle of symmetric 
$2$-tensors. 

The decomposition of $\Sym^2(\HM^* \otimes \EM^*)$ is  well-known and follows from general formulas for 
Schur functors (see \cite{FH}). It can also
be proved by first giving explicit embeddings of the three summands and then comparing
the dimensions.  The embedding of the summands can be 
described as follows. Consider  $\alpha_H \in \Sym^2 \HM^*$ and $\alpha_E \in \Sym^2 \EM^*$,
then $\alpha= \alpha_H \otimes \alpha_E$ defined by 
$\alpha(a \otimes e, \tilde a \otimes \tilde e) =\alpha_H(a, \tilde a) \alpha_E(e, \tilde e) $ is
obviously in  $\Sym^2 \T M^\C$. For any  $\eta \in \EM^* \otimes \EM^*$ we define
$\eta_T = \sigma_H \otimes \eta$ by  $\eta_T(a \otimes e, \tilde a \otimes \tilde e) =\sigma_H(a, \tilde a)\eta(e, \tilde e)$.
If $\eta$ is skew-symmetric, i.e. in $\Lambda^2E$, then $\eta_T$ is symmetric. Thus defining
the embedding of the second and third summand. Recall that the Riemannian metric is given
as $g= \sigma_H \otimes \sigma_E =  (\sigma_E)_T$. 
For later use we also note, that $\eta_T$ is skew-symmetric if $\eta $ is symmetric.
\qed

\medskip

The proof of Theorem \ref{qk-stable} is based on the following simple observation.

\begin{Lemma}
The three bundles $\Sym^2 \HM^* \otimes \Sym^2 \EM^*, \, \Lambda^2_0\EM^* $ and the trivial bundle $ \C $ 
of the decomposition \eqref{deco} all appear as parallel subbundles of the  bundle of $4$-forms $\Lambda^4 \T M^*$.
\end{Lemma}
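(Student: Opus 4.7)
The plan is to complexify and to identify each of the three $\Sp(1)\cdot\Sp(n)$-summands of \eqref{deco} inside $\Lambda^4 \T M^* \otimes \C$ by combining the Cauchy decomposition with Littlewood's branching rule from $\mathrm{GL}(\EM)$ to $\Sp(n)$. Applying $\Lambda^k(V \otimes W) = \bigoplus_{\lambda \vdash k} S_\lambda V \otimes S_{\lambda'} W$ with $V = \HM^*$ and $W = \EM^*$, using that $\dim \HM = 2$ forces surviving partitions to have at most two rows, and trivializing $\det \HM^* \cong \C$ via $\sigma_H$, one obtains
\begin{equation*}
\Lambda^4 \T M^* \otimes \C \;\cong\; \Sym^4 \HM^* \otimes \Lambda^4 \EM^* \;\oplus\; \Sym^2 \HM^* \otimes S_{(2,1,1)} \EM^* \;\oplus\; S_{(2,2)} \EM^* \ .
\end{equation*}

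The trivial summand $\C$ of \eqref{deco} is then handled directly: the globally defined, parallel fundamental $4$-form $\Omega := \omega_I \wedge \omega_I + \omega_J \wedge \omega_J + \omega_K \wedge \omega_K$ spans the unique $\Sp(1)\cdot\Sp(n)$-invariant line in $\Lambda^4 \T M^*$ and sits inside the third summand above.

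For the remaining two summands of \eqref{deco}, I would apply Littlewood's branching formula for the restriction $\mathrm{GL}(\EM) \downarrow \Sp(n)$, in which admissible partitions $\delta$ are those with only even column lengths. A short enumeration of the Littlewood--Richardson coefficients $c^{\lambda}_{\mu\delta}$ yields
\begin{equation*}
S_{(2,2)} \EM^* \big|_{\Sp(n)} = V_{(2,2)} \oplus V_{(1,1)} \oplus V_{(0)}, \qquad S_{(2,1,1)} \EM^* \big|_{\Sp(n)} = V_{(2,1,1)} \oplus V_{(2)} \oplus V_{(1,1)} ,
\end{equation*}
with $V_\mu$ denoting the $\Sp(n)$-irrep of highest weight $\mu$. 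Since $V_{(1,1)} \cong \Lambda^2_0 \EM^*$ appears in $S_{(2,2)} \EM^*$, the bundle $\Lambda^2_0 \EM^*$ embeds $\Sp(1)\cdot\Sp(n)$-equivariantly into the third summand above; since $V_{(2)} \cong \Sym^2 \EM^*$ appears in $S_{(2,1,1)} \EM^*$, the bundle $\Sym^2 \HM^* \otimes \Sym^2 \EM^*$ embeds into the second summand. Equivariance of all the constructions ensures that the embeddings are parallel, and restricting to real sections yields the desired parallel subbundles of $\Lambda^4 \T M^*$.

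The only substantive step is the branching computation above; the rest is formal manipulation of Schur functors. A more geometric, less representation-theoretic alternative would bypass Littlewood's rule by writing the embeddings down explicitly---wedging the K\"ahler $2$-forms $\omega_A$ with naturally built $2$-forms associated to each summand of \eqref{deco}---but this route leads to essentially the same conclusion with more bookkeeping. The main obstacle is accurately executing the branching enumeration, in particular checking that $V_{(1,1)}$ enters $S_{(2,2)} \EM^*$ through $\delta = (1,1)$ and that $V_{(2)}$ enters $S_{(2,1,1)} \EM^*$ through $\delta = (1,1)$ as well; a sanity check by matching $\GL$-dimensions (e.g.\ $\dim S_{(2,2)} \EM^* = 20$ when $n = 2$, matching $14 + 5 + 1$) confirms the decompositions.
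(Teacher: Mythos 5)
Your argument is correct and reaches the paper's conclusion by a genuinely different route. The paper simply quotes Weingart's decomposition of the form spaces of $\HM\otimes\EM$ (reference [G]) to see that the three summands of \eqref{deco} also occur in $\Lambda^4(\HM^*\otimes\EM^*)$, and then complements this with explicit parallel embeddings: $\alpha\otimes\beta\mapsto\alpha_T\wedge\beta_T$ for $\Sym^2\HM^*\otimes\Sym^2\EM^*$, and $\eta\mapsto\hat\eta=\omega_I\wedge\eta_T(I\cdot,\cdot)+\omega_J\wedge\eta_T(J\cdot,\cdot)+\omega_K\wedge\eta_T(K\cdot,\cdot)$ for $\Lambda^2\EM^*$, which sends $\sigma_E$ to the Kraines form $\Omega$ --- precisely the ``geometric alternative'' you mention but do not carry out. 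Your Cauchy-plus-Littlewood computation replaces the citation by a self-contained derivation, and its ingredients check out: only $\lambda=(4),(3,1),(2,2)$ survive because $\dim\HM=2$, the relevant Littlewood--Richardson coefficients are as you state, $V_{(1,1)}\cong\Lambda^2_0\EM^*$ and $V_{(2)}\cong\Sym^2\EM^*$, and the $n=2$ dimension count $20=14+5+1$ is right. Equivariance indeed makes the resulting subbundles parallel, and evenness of the total degree in the $\HM$- and $\EM$-factors makes everything globally defined although $\HM,\EM$ are only local; for the application in the paper (an isometric parallel embedding intertwining $\Delta_L$ and $\Delta_H$) the abstract existence you establish is all that is needed, while the paper's explicit formulas buy concreteness (e.g.\ the appearance of $\Omega$).

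One caveat you should make explicit: Littlewood's restriction rule in the form you use is valid in the stable range $\ell(\lambda)\le n$, so your displayed decomposition of $S_{(2,1,1)}\EM^*$ is literally correct only for $n\ge 3$; for $n=2$ the constituent $V_{(2,1,1)}$ does not exist and is removed by the modification rule, leaving $S_{(2,1,1)}\C^4|_{\Sp(4)}\cong V_{(2)}\oplus V_{(1,1)}$ of dimension $15=10+5$, as one checks directly from $\Lambda^3\C^4\otimes\C^4\cong\C^4\otimes\C^4$ together with $\Lambda^3\C^4\otimes\C^4\cong S_{(2,1,1)}\C^4\oplus\Lambda^4\C^4$ (your own dimension sanity check only tests the $(2,2)$ summand, not this one). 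Since the summand $V_{(2)}\cong\Sym^2\EM^*$ you need survives in all cases, this is a gap in exposition rather than in substance, and the proof goes through for all quaternion-K\"ahler manifolds of dimension $4n\ge 8$.
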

\proof
Again it follows directly from  properties of $\Sp(1)\cdot\Sp(n)$ representations. The decomposition of the space of
$k$ forms on $\HM \otimes \EM$ is given in  \cite{G}. It turns out that the  three irreducible summands of  the  $\Sp(1)\cdot\Sp(n)$ representation 
$ \Sym^2 (\HM^* \otimes \EM^*) $ given in \eqref{deco} also appear in the decomposition of the representation  $\Lambda^4  (\HM^* \otimes \EM^*)$.
Hence, there are corresponding parallel subbundles in $\Lambda^4  \T^* M^\C$ and parallel bundle maps identifying 
the three subbundles in $ \Sym^2 \T^* M^\C$ with the corresponding subbundles in $\Lambda^4 \T^* M^\C$. 

The
embeddings can also be described explicitly. For any   symmetric $2$-tensors $ \alpha \in \Sym^2 \HM^*$ resp.  $ \beta \in \Sym^2 \EM^*$ 
we introduce the notation $\alpha_T$ resp. $\beta_T$ for the $2$-forms on $M$ obtained by taking the tensor product with the
symplectic form $\sigma_E$ resp. $\sigma_H$. Note that we obtain the metric as $g = (\sigma_E)_T = (\sigma_H)_T$. Then the map 
$\alpha \otimes \beta \mapsto \alpha_T \wedge \beta_T$ defines an embedding of $\Sym^2 \HM^* \otimes \Sym^2 \EM^*$ into the space  of
$4$-forms $\Lambda^4  \T^* M^\C$.
In order to  describe the embedding of $\Lambda^2 \EM^*$ we fix a local quaternionic frame $\{I, J, K\}$ with corresponding 
K\"ahler forms $\omega_I, \omega_J, \omega_K$, i.e. $\omega_I(\cdot, \cdot) = g(I \cdot, \cdot)$ and similarly
for $\omega_J$ and $\omega_K$.  For any $\eta \in \Lambda^2 \EM^*$ we define a $4$-form  $\hat \eta $ on $M$  by the following formula
$$
\hat \eta \;:=\; \omega_I \wedge (\eta_T(I \cdot, \cdot )) + \omega_J \wedge (\eta_T(J \cdot, \cdot ))  +\omega_K \wedge (\eta_T(K \cdot, \cdot )) \ .
$$
The map $\eta \mapsto \hat \eta$ then defines the   embedding $\Lambda^2 \EM^* \rightarrow \Lambda^4 \T^* M^\C$. In particular, the symplectic form
$\sigma_E \in \Lambda^2\EM^*$ is mapped to the Kraines form $\Omega := \omega_I \wedge \omega_I + \omega_J \wedge \omega_J  + \omega_K \wedge \omega_K$.
The parallel $4$-form $\Omega$ spans the trivial subbundle in $\Lambda^4 \T^*M^\C$.
\qed


\begin{proof}[Proof of Theorem \ref{qk-stable}]
Since the standard Laplace operator $\Delta_L$ commutes with parallel bundle maps we see that there is an isometric embedding $\Phi: \Sym^2 \T^* M\to \Lambda^4 \T^* M$ of bundles such that $\Phi\circ \Delta_L=\Delta_H\circ \Phi$.
%
Since $\Delta_E=\Delta_L-2\frac{\scal}{4n}$ it follows that $\Delta_E$ is strictly positive in the $L^2$-sense if the Einstein constant is negative. Indeed, if $h$ is a compactly supported tt-tensor and $\omega=\Phi(h)$, we have
\begin{equation*}
\begin{split}
(\Delta_Eh,h)_{L^2}&=(\Delta_Lh,h)_{L^2}-2\frac{\scal}{4n}\left\|h\right\|_{L^2}^2\\
&=(\Delta_H\omega,\omega)_{L^2}-2\frac{\scal}{4n}\left\|h\right\|_{L^2}^2\geq -2\frac{\scal}{4n}\left\|h\right\|_{L^2}^2,
\end{split}
\end{equation*} 
where we used that the Hodge-Laplace operator is non-negative. 
It follows that quaternion-K\"ahler manifolds of negative scalar curvature
are stable, thus proving Theorem \ref{qk-stable}. 
\end{proof}

 
 \begin{Remark}
 For quaternion-K\"ahler manifolds of positive scalar curvature the situation is more complicated. Here the stability question is still open. As already mentioned, the
 complex $2$-plane Grassmannian $\mathrm{Gr}_2(\C^{n+2})$ is only semi-stable, as it has a non-trivial space of
  infinitesimal Einstein deformations.
 On a quaternion-K\"ahler manifold of positive scalar curvature one can show that the Lichnerowicz Laplacian on trace-free symmetric $2$-tensors 
 is bounded from below by $2 \, \frac{\scal}{4n}  \, \frac{n+1}{n+2}$ (see \cite{H}).  It is perhaps interesting to note that  $\frac{\scal}{4n}  \, \frac{n+1}{n+2}$ is exactly the Einstein constant of the 
 K\"ahler Einstein metric of the twistor space associated to the quaternion-K\"ahler manifold.
 \end{Remark}

 \section{Non-integrable deformations and scalar curvature rigidity}\label{sec:non-integrable}
Throughout this section, let us assume that $(M,g)$  is an Einstein manifold of volume $1$ and with Einstein constant $\lambda$. Recall that $g$ is a critical point of $S$ on $\mathcal{M}_1$. In order to avoid technical complications, we want to avoid the volume constraint. For this reason, we work instead with the Einstein-Hilbert functional with cosmological constant, given by
\[
S_{\lambda}:g\mapsto S_{\lambda}(g)=\int_M(\scal_g+(2-n)\lambda)\dv_g.
\]
A standard computation shows that
\[
D_gS_{\lambda}(h)=-\left(F_g,h\right)_{L^2},
\]
where
\[
F_g=\Ric_g-\frac{1}{2}\scal_g\cdot g-\frac{1}{2}(2-n)\lambda g.
\]
Note that the Euler-Lagrange equation $F_g=0$ is equivalent to $\Ric_g=\lambda g$.
\subsection{A symmetric trilinear map}
Recall the definitions $E_g=\Ric_g-\frac{1}{n}\left(\int_M\scal_g\dv_g\right) g$ and $\epsilon(g)=\ker(\Delta_E|_{TT})$ from Subsection \ref{subsec:stability}. For a map $\Phi:V\to W$ between Banach spaces, we denote the k'th Frechet derivative at $g$ by $D^k_g\Phi:V\times\ldots \times V\to W$. Our goal in this section is to show that the expression $(D^2_gE(h,k),l)_{L^2}$ defines a totally symmetric trilinear form on $\epsilon(g)$.
\begin{Lemma}\label{lem:linearizations_and_Lambda}
We have
\[
(D^2_gE(h,h),k)_{L^2}=(D^2_gF(h,h),k)_{L^2}
\]
for all $h,k\in \epsilon(g)$.
\end{Lemma}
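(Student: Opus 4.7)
The plan is to compute $E_g - F_g$ explicitly and then exploit the tt-condition on both $h$ and $k$ at each step. From the definitions,
\[
E_g - F_g \;=\; \left(-\tfrac{1}{n}\int_M \scal_g\dv_g + \tfrac{1}{2}\scal_g + \tfrac{1}{2}(2-n)\lambda\right) g \;=:\; \phi(g)\,g,
\]
where $\phi(g)$ is a scalar function on $M$ depending smoothly on the metric. Substituting $\scal_g = n\lambda$ and using $\volume(M,g)=1$ at the Einstein metric shows $\phi(g) = 0$ at $g$, consistently with $E_g = F_g = 0$.

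Next, I would apply the Leibniz rule to $\Psi(g) := \phi(g)\,g$, which gives
\[
D^2_g\Psi(h,h) \;=\; D^2_g\phi(h,h)\cdot g + 2\,D_g\phi(h)\cdot h.
\]
The main step will be to show $D_g\phi(h) = 0$ for every $h \in \epsilon(g)$. For this I rely on two standard linearization identities at an Einstein metric: the linearization of scalar curvature $D_g\scal(h) = \Delta_g(\tr_g h) + \delta_g\delta_g h - \langle \ric_g, h\rangle$ collapses on a tt-tensor $h$ to $-\lambda\tr_g h = 0$; and the first variation of the total scalar curvature $D_g\!\left(\int \scal_g\dv_g\right)(h) = -\int\langle \ric_g - \tfrac{1}{2}\scal_g g,\, h\rangle\dv_g$ also vanishes on a tt-tensor at $g$, since $\ric_g - \tfrac{1}{2}\scal_g g$ is a constant multiple of $g$ and $\tr_g h = 0$.

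Once $D_g\phi(h) = 0$ is established, the second-order expression reduces to $D^2_g(E-F)(h,h) = D^2_g\phi(h,h)\cdot g$, which is pointwise a multiple of $g$. Pairing with any $k \in \epsilon(g)$ and using $\tr_g k = 0$ then yields
\[
(D^2_g(E-F)(h,h), k)_{L^2} \;=\; \int_M D^2_g\phi(h,h)\,\tr_g k \dv_g \;=\; 0,
\]
which gives the desired identity. The whole argument is mostly routine bookkeeping; the only point requiring genuine input will be the vanishing of $D_g\phi(h)$ on infinitesimal Einstein deformations, which is the step I would be most careful about and where both the tt-condition on $h$ and the Einstein condition on $g$ enter.
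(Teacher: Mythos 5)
Your proof is correct and follows essentially the same route as the paper: both arguments reduce to the facts that the first variations of the scalar curvature and the volume element vanish on tt-tensors at an Einstein metric, and that the remaining discrepancy between $D^2_gE(h,h)$ and $D^2_gF(h,h)$ is pointwise a multiple of $g$, hence orthogonal to the trace-free tensor $k$. Packaging the computation as a single scalar function $\phi$ multiplying $g$ is a tidy reorganization of the paper's term-by-term expansion, but not a genuinely different argument.
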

\begin{proof}
By the first variation of the scalar curvature and the volume element (see e.g.\ \cite[Thm.\ 1.174 and Prop.\ 1.186]{besse}, we have  $D_g\scal(h)=0$ and  $D_g{dV}(h)=0$ for every tt-tensor $h$.
Thus,
\begin{equation*}
\begin{split}
D^2_gE(h,h)&=D^2_g\Ric(h,h)-\frac{1}{n}\left(\int_MD^2_g\scal(h,h)\dv_g+\scal_g\cdot D^2_g{dV}(h,h)\right) g
\\
&\qquad-\frac{2}{n}\left(\int_M D_g\scal(h)D_g{dV}(h)\right)g-\frac{2}{n}\left(\int_M D_g(\scal\dv)(h)\right)h
\\
&=D^2_g\Ric(h,h)-\frac{1}{n}\left(\int_MD^2_g\scal(h,h)\dv_g+\scal_g\cdot D^2_g{dV}(h,h)\right) g
\end{split}
\end{equation*}
and
\begin{equation*}
\begin{split}
D^2_gF(h,h)&=D^2_g\Ric(h,h)-\frac{1}{2}D^2_g\scal\cdot g-D_g\scal(h)\cdot h\\
&=D^2_g\Ric(h,h)-\frac{1}{2}D^2_g\scal(h,h)\cdot g.
\end{split}
\end{equation*}
By taking the scalar product of the two right hand sides with a tensor $k\in\epsilon(g)$ and using that $\langle g,k\rangle=\tr_gh=0$, we see that
\[
\langle D^2_gE(h,h), k \rangle_g = \langle D^2_g\Ric(h,h), k \rangle_g = \langle D^2_gF(h,h), k \rangle_g
\]
and integrating over $M$ yields the desired result.
\end{proof}
This lemma allows us to identify $(D^2_gE(h,k),l)_{L^2}$ as the third variation of $S_{\lambda}$ which is key for the following assertion.
\begin{Proposition}\label{lem:nonint_2nd_order}
The trilinear form
\[
\Phi:\epsilon(g)\times \epsilon(g) \times \epsilon(g) \to\R,\qquad
(h,k,l)\mapsto (D^2E(h,k),l)_{L^2}
\]
is totally symmetric. In particular,
if we find $h,k\in \epsilon(g)$ such that $(D^2E(h,h),k)_{L^2}\neq 0$, we also find $l\in \epsilon(g)$ such that $(D^2E(l,l),l)_{L^2}\neq 0$.
\end{Proposition}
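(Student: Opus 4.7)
The plan is to show that, up to sign, $\Phi$ coincides with the third Fr\'echet derivative of the Einstein--Hilbert functional with cosmological constant $S_\lambda$ restricted to triples in $\epsilon(g)$. Since higher Fr\'echet derivatives of a smooth functional are automatically totally symmetric (Schwarz), this will yield the claim. First I would polarize Lemma~\ref{lem:linearizations_and_Lambda}: both $D^2_gE$ and $D^2_gF$ are symmetric in their first two arguments, so the identity $(D^2_gE(h,h),k)_{L^2}=(D^2_gF(h,h),k)_{L^2}$ on $\epsilon(g)$ polarizes to $(D^2_gE(h,k),l)_{L^2}=(D^2_gF(h,k),l)_{L^2}$ for all $h,k,l\in\epsilon(g)$. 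So it suffices to prove total symmetry of $(h,k,l)\mapsto(D^2_gF(h,k),l)_{L^2}$.

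Next I would compute $D^3_gS_\lambda(h,k,l)$ directly. Starting from $D_gS_\lambda(h)=-(F_g,h)_{L^2}$, I Taylor-expand the integrand $\langle F_{g+t_2k+t_3l},h\rangle_{g+t_2k+t_3l}\,dV_{g+t_2k+t_3l}$ to second order in $(t_2,t_3)$ and read off the $t_2t_3$ coefficient. Three groups of contributions arise: the principal term $\langle D^2_gF(k,l),h\rangle_g\,dV_g$; terms proportional to $F_g$ or to $\tr_gk,\tr_gl$, which vanish because $g$ is Einstein and $k,l$ are tt; and mixed pairing-variation terms of the form $M(k;D_gF(l),h)$ and $M(l;D_gF(k),h)$, where $M(X;A,B)$ denotes the first variation of the pointwise pairing in direction $X$. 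This gives
\[
D^3_gS_\lambda(h,k,l)=-(D^2_gF(k,l),h)_{L^2}-\int_M\!\left[M(k;D_gF(l),h)+M(l;D_gF(k),h)\right]dV_g.
\]

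The crux is the identity $D_gF(k)=0$ for every $k\in\epsilon(g)$. A direct computation using $D_g\scal(k)=0$ and $D_g\Ric(k)=\tfrac12\Delta_Lk$ on tt-tensors, together with $\scal_g=n\lambda$, gives $D_gF(k)=\tfrac12\Delta_Lk-\lambda k$; and on tt-tensors at an Einstein metric $\Delta_L=\Delta_E+2\lambda$, so $\Delta_Lk=2\lambda k$ on $\epsilon(g)$. Hence the mixed terms drop out and $D^3_gS_\lambda(h,k,l)=-(D^2_gF(k,l),h)_{L^2}$ on $\epsilon(g)^3$. The left-hand side is totally symmetric in $(h,k,l)$ by Schwarz, hence so is the right-hand side, and combined with the polarized Lemma~\ref{lem:linearizations_and_Lambda} the trilinear form $\Phi$ is totally symmetric on $\epsilon(g)$.

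For the \emph{in particular} assertion I would polarize once more. Setting $p(l):=\Phi(l,l,l)$, the standard identity
\[
6\,\Phi(a,b,c)=p(a+b+c)-p(a+b)-p(a+c)-p(b+c)+p(a)+p(b)+p(c)
\]
shows that if $p\equiv0$ on $\epsilon(g)$ then $\Phi\equiv0$, contradicting the hypothesis $(D^2_gE(h,h),k)_{L^2}\neq0$. The main obstacle I anticipate is the careful bookkeeping of the pairing- and volume-variation terms in the Taylor expansion of $S_\lambda$, all of which ultimately vanish thanks to $F_g=0$, the trace-freeness of $k,l$, and the identity $D_gF|_{\epsilon(g)}=0$; conceptually this last identity expresses that elements of $\epsilon(g)$ are first-order Einstein deformations preserving the fixed cosmological constant~$\lambda$.
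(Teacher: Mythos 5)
Your proposal is correct and follows essentially the same route as the paper: identify $\Phi(h,k,l)$ with $-D^3_gS_\lambda(h,k,l)$ on $\epsilon(g)^3$ by observing that all lower-order terms (those involving $F_g$, $D_gF$ on $\epsilon(g)$, and the trace/volume variations) vanish, invoke Schwarz's theorem for total symmetry, and polarize for the final assertion. The only cosmetic differences are that you polarize Lemma~\ref{lem:linearizations_and_Lambda} explicitly at the outset (the paper does this implicitly) and use a slightly different but equivalent polarization identity at the end.
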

\begin{Remark}
The symmetry of $\Phi$ was already shown in \cite{NS23}. There  the authors obtain as a result of a direct but lengthy calculation an explicit formula 
for the full obstruction $\Phi$ in terms of  the Fr\"olicher-Nijenhuis bracket. It turns out to be symmetric in all three arguments and in particular it recovers the 
Koiso obstruction, i.e. $\Psi(h) = \Phi(h,h,h)$. Hence, Proposition \ref{lem:nonint_2nd_order} gives a simple way to check the explicit formula for $\Phi$ by
first reformulating the Koiso obstruction and then polarising it. In particular, we see that the vanishing of the Koiso obstruction is a necessary and sufficient
condition for integrability of second order.

\end{Remark}

\begin{proof}
We are going to show that the trilinear form of the lemma equals the third variation of $S_{\lambda}(g)$. Let $h,k,l\in \epsilon(g)$ and compute
\begin{equation*}
\begin{split}
\frac{d}{dt}\frac{d}{ds}\frac{d}{dr}S_{\lambda}(g+th+sk+rl)&=
\frac{d}{dt}\frac{d}{ds}D_{g+th+sk+rl}S_{\lambda}(l)\\
&=-\frac{d}{dt}\frac{d}{ds}(F_{g+th+sk+rl},l)_{L^2}\\
&=-\frac{d}{dt}[(D_{g+th+sk+rl}F(k),l)_{L^2}+(F_{g+th+sk+rl},k*l)_{L^2}].
\end{split}
\end{equation*}
Here, the second term on the right hand side comes from differentiating the scalar product and the volume element. We use the $*$-notation to denote a linear combination of tensor products and contractions.
The two terms on the right hand side are computed to be
\begin{equation*}
\begin{split}
\frac{d}{dt}(D_{g+th+sk+rl}F(k),l)_{L^2}&=(D^2_{g+th+sk+rl}F(h,k),l)_{L^2}+(D_{g+th+sk+rl}F(k),h*l)_{L^2},\\
\frac{d}{dt}(F_{g+th+sk+rl},k*l)_{L^2}&=(D_{g+th+sk+rl}F(h),k*l)_{L^2}+(F_{g+th+sk+rl},h*k*l)_{L^2}.
\end{split}
\end{equation*}
Also here, the second terms on the right hand sides comes from differentiating the scalar product and the volume element. 
Now, we evaluate at $t,s,r=0$. Because $g$ is Einstein, $F_g=0$. Because $h,k\in \epsilon(g)$, we have $DF_{g}(k)=DF_g(h)=0$ by the first variation of the Ricci tensor and the scalar curvature, see \cite[Thm.\ 1.174]{besse}. Therefore by the above,
\[
\frac{d}{dt}\frac{d}{ds}\frac{d}{dr}|_{t,s,r=0}S_{\lambda}(g+th+sk+rl)=-(D^2_{g}F(h,k),l)_{L^2}=-(D^2_{g}E(h,k),l)_{L^2},
\]
where the second equation follows from Lemma \ref{lem:linearizations_and_Lambda}.
The left hand side is totally symmetric by Schwarz's theorem, and so is the right hand side. The second statement of the Lemma follows from the identity
\begin{gather*}
6\Phi(h,h,k)=\Phi(k+h,k+h,k+h)+\Phi(k-h,k-h,k-h)-2\Phi(k,k,k),
\end{gather*}
which holds for any totally symmetric triliear form.
\end{proof}
\subsection{Proof of Theorem \ref{thm:nonint_nonscr}}
Before we prove Theorem \ref{thm:nonint_nonscr}, we compute the first three derivatives of $S_\lambda$ along a curve $g_t$ which is tangent to $h\in\epsilon(g)$ at $g_0=g$. In particular, we show that these derivatives depend on $g_0'=h$ but not on higher derivatives of $g_t$.
\begin{Lemma}\label{lem_third_variation}
Let $h\in \epsilon(g)$. Then for every smooth family $g_t$ of metrics with $g_0=g$ and $\frac{d}{dt}|_{t=0}g_t=h$, we have
\begin{gather*}
\frac{d}{dt}|_{t=0}S_{\lambda}(g_t)=0,\qquad \frac{d^2}{dt^2}|_{t=0}S_{\lambda}(g_t)=0,\qquad
\frac{d^3}{dt^3}|_{t=0}S_{\lambda}(g_t)=-(D^2_gE(h,h),h)_{L^2}.
\end{gather*}
\end{Lemma}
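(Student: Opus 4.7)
The plan is to Taylor-expand $S_{\lambda}(g_t)$ at $t=0$ and exploit three facts already present in this section: the Einstein condition $F_g=0$; the vanishing $D_g F(h)=0$ for every $h\in\epsilon(g)$, which comes from the first variation formulas for $\Ric$ and $\scal$ on tt-tensors (see the proof of Proposition \ref{lem:nonint_2nd_order}); and the formal $L^2$-self-adjointness of $D_g F$, which follows from the symmetry of the Hessian $D^2_g S_{\lambda}$ together with $F_g=0$.

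Starting from $\frac{d}{dt}S_{\lambda}(g_t)=-(F_{g_t},g_t')_{L^2_{g_t}}$, the first claim $\frac{d}{dt}|_{t=0}S_{\lambda}(g_t)=-(F_g,h)_{L^2}=0$ is immediate. For the higher derivatives I would set $k=g_0''$ and $m=g_0'''$ and apply the chain rule to obtain
\begin{align*}
\tfrac{d^2}{dt^2}\big|_{t=0} S_{\lambda}(g_t) &= D^2_g S_{\lambda}(h,h) + D_g S_{\lambda}(k), \\
\tfrac{d^3}{dt^3}\big|_{t=0} S_{\lambda}(g_t) &= D^3_g S_{\lambda}(h,h,h) + 3 D^2_g S_{\lambda}(h,k) + D_g S_{\lambda}(m).
\end{align*}
Every term $D_g S_{\lambda}(\cdot)=-(F_g,\cdot)_{L^2}$ vanishes because $F_g=0$, and every term $D^2_g S_{\lambda}(h,\cdot)=-(D_g F(\cdot),h)_{L^2}=-(\cdot,D_g F(h))_{L^2}$ vanishes because $D_g F(h)=0$. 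This kills the whole second derivative and collapses the third to the single term $D^3_g S_{\lambda}(h,h,h)$, which I would then evaluate along the linear path $g+th$.

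For that last term, the Taylor expansion of $F_{g+th}$ around $t=0$ starts in order $t^2$, since $F_g=0$ and $D_g F(h)=0$:
\[
F_{g+th}=\tfrac{t^2}{2}D^2_g F(h,h)+O(t^3).
\]
Because $(\cdot,\cdot)_{L^2_{g+th}}$ and $dV_{g+th}$ deviate from their values at $g$ only by $O(t)$, this yields
\[
\tfrac{d}{dt}S_{\lambda}(g+th)=-(F_{g+th},h)_{L^2_{g+th}}=-\tfrac{t^2}{2}(D^2_g F(h,h),h)_{L^2}+O(t^3),
\]
so differentiating twice and evaluating at $t=0$ gives $D^3_g S_{\lambda}(h,h,h)=-(D^2_g F(h,h),h)_{L^2}$. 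Lemma \ref{lem:linearizations_and_Lambda} then identifies this with $-(D^2_g E(h,h),h)_{L^2}$, completing the proof.

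I do not foresee any serious obstacle: the entire content of the lemma is that the second and third jets of $g_t$ at $t=0$ drop out of the answer, and this is enforced precisely by the pair of cancellations $F_g=0$ and $D_g F(h)=0$. The only bookkeeping care needed is to invoke self-adjointness of $D_g F$ (or, equivalently, symmetry of $D^2_g S_{\lambda}$) so that the $k$-dependent term in the third derivative is seen to vanish.
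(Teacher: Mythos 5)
Your proof is correct and follows essentially the same route as the paper: both arguments rest on $F_g=0$, on $D_gF(h)=0$ for $h\in\epsilon(g)$, on the symmetry of $D_gF$ (equivalently of the Hessian of $S_\lambda$) to kill the term involving $g_0''$, and on Lemma \ref{lem:linearizations_and_Lambda} to pass from $F$ to $E$. The only difference is organizational — you package the computation via the chain rule and a Taylor expansion of $F_{g+th}$ along the linear path, whereas the paper differentiates $-(F_{g_t},g_t')_{L^2}$ directly and tracks the volume-element terms explicitly — but the content is identical.
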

\begin{proof}
Let $f(t)=S_{\lambda}(g_t)$. Then we compute
\begin{equation*}
\begin{split}
f'(t)&=D_{g_t}S_{\lambda}(g_t')=-(F_{g_t},g_t')_{L^2},\\
f''(t)&=-(D_{g_t}F(g_t'),g_t')_{L^2}-(F_{g_t},g_t'')_{L^2}+(F_{g_t},g_t'*g_t')_{L^2},\\
f'''(t)&=-(D^2_{g_t}F(g_t',g_t'),g_t')_{L^2}
-2(D_{g_t}F(g_t'),g_t'')_{L^2}
-(D_{g_t}F(g_t''),g_t')_{L^2}
-(F_{g_t},g_t''')_{L^2}\\
&\qquad +(D_{g_t}F(g_t'),g_t'*g_t')_{L^2} +(F_{g_t},g_t'*g_t'')_{L^2}+(F_{g_t},g_t'*g_t'*g_t')_{L^2}.
\end{split}
\end{equation*}
The second term for $f''(t)$ and the
 terms in the last line come from differentiating the scalar product and the volume element. Since $F_{g}=0$, we have $f'(0)=0$. Because $h\in \epsilon(g)$, the first variation of the Ricci tensor and the scalar curvature (see \cite[Thm.\ 1.174]{besse}) yield
 \begin{gather*}
 f''(0)=-(D_{g}F(h),h)_{L^2}=-\frac{1}{2}(\Delta_Eh,h)_{L^2}=0.
 \end{gather*}
 Before we are going to evaluate the third variation, we compute
\begin{equation*}
\begin{split}
 \frac{d}{dt}\frac{d}{ds}S_{\lambda}(g+th+sk)&=\frac{d}{dt}D_gS_{\lambda}(k)\\
 &=-\frac{d}{dt} (F_{g+th+sk},k)_{L^2}
 =- (D_{g+th+sk}F(h),k)_{L^2}+(F_{g+th+sk},h*k)_{L^2}.
\end{split}
\end{equation*}
Evaluating at $t=s=0$ yields
 \begin{gather*}
  \frac{d}{dt}\frac{d}{ds}|_{t,s=0}S_{\lambda}(g+th+sk)=- (D_{g}F(h),k)_{L^2},
 \end{gather*}
 which in particular shows that $DF_{g}$ is symmetric. Thus, $(D_{g}F(g_0''),g_0')_{L^2}=(D_{g}F(g_0'),g_0'')_{L^2}$. In combination with $F_{g}=0$ and $D_{g}F(g_0')=DF_{g}(h)=0$, we obtain
 \begin{gather*}
f'''(0)= -(D^2_{g}F(h,h),h)_{L^2}= -(D^2_{g}E(h,h),h)_{L^2},
 \end{gather*}
 which finishes the proof.
\end{proof}
Now we have all ingredients together to prove Theorem \ref{thm:nonint_nonscr}.
\begin{proof}[Proof of Theorem \ref{thm:nonint_nonscr}]
We may assume that $\mathrm{vol}(M,g)=1$.
If $l\in \epsilon(g)$ is not integrable of second order, we find $k\in \epsilon(g)$ such that 
 \begin{gather*}(D^2_gE(l,l),k)_{L^2}\neq 0.
  \end{gather*}
 By Proposition \ref{lem:nonint_2nd_order}, we also find $h\in \epsilon(g)$ such that
 \begin{gather*}(D^2_gE(h,h),h)_{L^2}\neq 0.
 \end{gather*}
Because $h\in TT$, it is tangent to the manifold $\mathcal{C}_1$, c.f.\ Remark \ref{rem:stability_scr}.
 Therefore, we find a curve $g_t\in \mathcal{C}_1$ with $g_0=g$ and $g'_0=h$. By
Lemma \ref{lem_third_variation}, we have 
\begin{gather*}
\frac{d}{dt}|_{t=0}S(g_t)=0,\qquad \frac{d^2}{dt^2}|_{t=0}S(g_t)=0,\qquad
\frac{d^3}{dt^3}|_{t=0}S(g_t)=-(D^2_gE(h,h),h)_{L^2}\neq 0.
\end{gather*}
Depending on the sign of the third derivative, we get $S(g_t)>S(g)$ either for $t\in(0,\epsilon)$ 
or $t\in (-\epsilon,0)$.
 By definition of $S$ and $\mathcal{C}_1$, we get
\begin{gather*}
\scal_{g_t}>\scal_g,\qquad \mathrm{vol}(M,g_t)=1=\mathrm{vol}(M,g),
\end{gather*}
which finishes the proof of the theorem.
\end{proof}

\section*{Acknowledgments}
The authors are grateful to Gregor Weingart for helpful discussions. The first author would like to thank the G\"{o}ran Gustafsson Foundation for financial support.
 The second author acknowledges the support received by the Special Priority
 Program SPP 2026 {\em Geometry at Infinity} funded by the Deutsche
 Forschungsgemeinschaft DFG. 
 
\bigskip
 
 %

%
\end{document}